\definecolor{MyLinkColor}{rgb}{0,0,0.4}
\newcommand{\0}{\Omega}
\newcommand{\e}{\varepsilon}
\newcommand{\p}{\partial}
\newcommand{\wt}{\widetilde}
\newcommand{\ov}{\overline}
\newcommand{\G}{\Gamma}
\newcommand{\R}{\mathbb{R}}
\begin{document}

\newtheorem{thm}{Theorem}[section]
\newtheorem{prop}[thm]{Proposition}
\newtheorem{cor}[thm]{Corollary}

\arttype{Article} 

\markboth{A.--V. Matioc and B.--V. Matioc}
{On periodic water waves with Coriolis effects and isobaric streamlines}

%
%
\copyrightauthor{}

\title{{\sc On periodic water waves with Coriolis effects and isobaric streamlines}}

\author{\footnotesize ANCA--VOICHITA MATIOC}

\address{Faculty of Mathematics, University of Vienna, \\ Nordbergstra\ss e 15, 1090 Vienna, Austria\\
\email{anca.matioc@univie.ac.at}}

\author{\footnotesize BOGDAN--VASILE MATIOC}

\address{Faculty of Mathematics, University of Vienna, \\ Nordbergstra\ss e 15, 1090 Vienna, Austria\\
\email{bogdan-vasile.matioc@univie.ac.at}}

\maketitle

\begin{abstract}
In this paper we prove that  solutions of the {\em f}--plane approximation for equatorial geophysical deep water waves, which have the property that the pressure is constant along the streamlines and do not possess
stagnation points,  are Gerstner-type waves.
Furthermore, for waves traveling over a flat bed, we prove that there are only laminar flow  solutions with these properties.
\end{abstract}

\keywords{Periodic water waves, Gerstner's wave, Coriolis effects, Lagrangian coordinates.}

\ccode{2010 Mathematics Subject Classification: Primary: 76B15; Secondary: 37N10.}

\section{Introduction}

The motion of a fluid layer located on the Earth's surface is also influenced by Earth's rotation around the polar axis.
For fluid motions localized near the Equator, the variation of the Coriolis parameter
may be neglected and geophysical water waves in this region are modeled by the so-called {\em f}--plane approximation. 
The physical relevance of the {\em f}--plane approximation for geophysical waves is discussed in \cite{AC12}.

In this paper we consider periodic solutions  of the {\em f}--plane approximation which possess isobaric streamlines, that is the pressure is constant along the streamlines of the flow.
Furthermore, we assume that the wave speed exceeds the horizontal  velocity of all particles in the fluid so that   stagnation points are excluded.
Based upon  a priori properties for such waves, which we establish herein, and regularity results for quasilinear elliptic equations, cf. \cite{Mo66},  
we prove that, if the ocean depth is infinite, such waves have an explicit Lagrangian description.
These solutions were found  initially by Gerstner in \cite{Ge09}  and rediscovered later on by Rankine \cite{ra63} in the context of  flows without Coriolis effects. 
More recently, their properties have been analyzed in 
\cite{Co01, Con11, He08}.
They may be adapted to describe edge waves in homogeneous \cite{C01a} and   stratified fluids \cite{Ra11, Yih66}, or 
 gravity waves solutions of  the {\em f}--plane approximation \cite{AM12xx}.

A important characteristic of Gerstner's  solutions  is  that they describe rotational flows with the fluid particles moving on circles, a feature which does not hold  for irrotational periodic deep water waves as seen in 
\cite{CoEhVi08, Am12} by means of linear theory.
For the description of the particle paths in linear  waves traveling over a flat bottom we refer to \cite{CoVi08, ioK08, MaA10}.
We enhance that the results on the particle trajectories for linear water waves  have nonlinear counterparts for  the  governing equations, cf. \cite{Co06,CoSt10, He06, He08-1}.  

In the context of waves in water over a flat bed, we prove that any solution of 
the {\em f}--plane approximation which has isobaric streamlines and no stagnation points  must be a laminar flow, that is the 
 streamlines are straight lines.

Though the angular speed of Earth's rotation is a well determined constant, our analysis remains valid for any arbitrary value of the angular  speed.
Particularly, we recover  previous results established for gravity water waves without Coriolis effects \cite{Ka04}, which are a special case of the situation analyzed herein (zero angular speed). 
It is worth mentioning that the symmetry of the Gerstner-type waves reflects the one obtained for gravity waves in water of finite or infinite depth, cf. \cite{ CoEhWa07, CoEs04_b,Co-Es04_1, Mama12, OS01}.

The outline of the paper is as follows: 
we present in Section \ref{S1} the mathematical formulation of the problem we deal with and  state the main results Theorems \ref{MT:1} and \ref{MT:2}. 
To this end we introduce in a Lagrangian framework  Gerstner  solutions for the deep-water problem and discuss their properties.
In Section \ref{S2} we reformulate the problem and establish some preliminary properties common for both deep and finite depth waves with isobaric streamlines and  without stagnation points. 
 Section \ref{S3} is dedicated to the proof of Theorem \ref{MT:1}, while in Section \ref{S4} we prove our second main result Theorem \ref{MT:2}.

\section{The mathematical model and the main results}\label{S1}
We consider herein a rotating frame with the origin at a point on Earth's surface,  the $X-$axis being chosen horizontally due east, the $Y-$axis horizontally  due north, and the $Z-$axis pointing  upward. 
Furthermore, we let  $Z=\eta(t,X,Y)$ be the upper free boundary of a fluid layer which may have a finite depth, the plane $Z=-d$, $d\in\R,$ being the impermeable bottom of ocean,  or is unbounded and in this situation we deal with deep water waves.
In the fluid layer located near the Equator, the governing equations in the $f-$plane approximation are, cf. \cite{GalStR07}, the Euler equations
\begin{equation}\label{Euler}
\left\{
\begin{array}{rllllll}
u_t+uu_X+vu_Y+wu_Z+2\omega w&=&-P_X/\rho \\
v_t+uv_X+vv_Y+wv_Z&=& -P_Y/\rho,\\
w_t+uw_X+vw_Y+ww_Z-2\omega u&=&-P_Z/\rho -g,
\end{array}
\right.
\end{equation}
and the equation of conservation of mass
\begin{equation}\label{maco}
u_X+v_Y+w_Z=0.
\end{equation}
Here $t$ is the time variable, $(u,v,w)$  the fluid velocity, $\omega=73\cdot 10^{-6} rad/s$ is the  rotation speed of the Earth round the polar axis
towards east, $\rho$ is the density constant of the water, $g=9,8 m/s^2$ is the  gravitational acceleration  at the Earth's surface, and $P$ is the pressure.  

At the wave surface, the pressure of the fluid matches the  atmospheric pressure $P_0:$
\begin{equation}\label{Dbc}
P=P_0 \quad \quad \text{on} \quad Z=\eta(t,X,Y).
\end{equation}
Moreover, the free surface of the wave consists at each moment of the same fluid particles, so that we obtain  the kinematic
boundary condition
\begin{equation}\label{kbc}
w=\eta_t+u\eta_X \quad \quad \text{on} \quad Z=\eta(t,X,Y).
\end{equation}
Since for finite depth waves the bottom of the ocean is assumed to be impermeable, we impose the no-flux condition
\begin{equation}\label{B1}
w=0 \quad \quad \text{on} \quad Z=-d.
\end{equation}
For deep water waves we assume that
\begin{equation}\label{B2}
(u,w)\to 0 \quad \quad \text{for $ Z \to-\infty$ uniformly in $(t,X)$,}
\end{equation}
meaning that at great depths there is practically no flow.

In this paper we consider traveling waves, with the velocity field, the pressure, and the free surface exhibiting an $(t,X)-$dependence of the form $(X-ct),$ where $c>0$ is the speed of the  wave surface.  
Moreover, we seek two-dimensional flows, independent upon the $Y-$coordinate and with $v \equiv 0$ throughout the flow.
Introducing the new variables
\begin{equation}\label{E:var}
x:=X-ct\qquad\text{and}\qquad z:=Z,
\end{equation}
the governing equations for water waves  reduce to  the following nonlinear free-boundary problem
\begin{equation}\label{E:T}
\left\{
\begin{array}{rlll}
(u-c)u_x+2\omega w&=&-P_x/\rho & \qquad \text{in $  \0_\eta,$}\\
(u-c)w_x-2\omega u&=&-P_z/\rho -g  &\qquad \text{in $  \0_\eta,$}\\
u_x+w_z &=& 0  & \qquad\text{in $ \0_\eta,$}\\
P&=&P_0 & \qquad\text{on $  z=\eta(x),$}\\
w&=&(u-c)\eta_x & \qquad \text{on $ z=\eta(x),$}
\end{array}
\right.
\end{equation}
supplemented by the boundary condition
\begin{equation}\label{E:T1}
\begin{array}{rllllllll}
w &=& 0  &\qquad  \text{on $ z =-d$}
\end{array}
\end{equation}
for   waves traveling over a flat bed, respectively
\begin{equation}\label{E:T2}
\begin{array}{rllllllll}
(u,w) \to 0  & \qquad  \text{for $ z \to-\infty$ uniformly in $x$}
\end{array}
\end{equation}
in the infinite-depth case.
The fluid domain $\0_\eta$ is bounded from above by the graph of $\eta$ and is unbounded from below in the infinite depth case, respectively is bounded by the line $z=-d$ when the ocean depth is  finite.  

The solutions we consider  are periodic in the variable $x$, that is $u,w,P,\eta$ are all periodic in $x$ with the same period, and  have no stagnation points. 
The latter property is satisfied if we assume that
\begin{equation}\label{COND1}
\sup_{\0_\eta} u<c.
\end{equation}
Moreover, we require a priori that the solutions have  the following regularity:
 \begin{equation}\label{COND2}
\text{$\eta\in C^{3}(\R)$ and $(u,w,P)\in \left(C\,^2( \ov \0_\eta)\right)^3$.}
\end{equation}

Our first main result is the following theorem.
\begin{thm}\label{MT:1}
 Assume that $(u,v,P,\eta)$ is a solution of \eqref{E:T}-\eqref{E:T1} which satisfies the relations \eqref{COND1}-\eqref{COND2}.
 If the  pressure is constant along  the streamlines, then the  flow is laminar.
\end{thm}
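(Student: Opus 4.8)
The plan is to show that the vertical velocity vanishes identically, $w\equiv0$ in $\0_\eta$. Once this is known the laminar character of the flow follows at once: the equation of conservation of mass forces $u_x\equiv0$, hence $u=u(z)$, so the stream function $\psi$ (normalized by $\psi_z=u-c$, $\psi_x=-w$) depends on $z$ alone, and its level sets, which are the streamlines, are the horizontal lines $z=\text{const}$; moreover, evaluating the kinematic boundary condition $w=(u-c)\eta_x$ on $z=\eta(x)$ and using \eqref{COND1} (so that $u-c\neq0$) gives $\eta_x\equiv0$, that is, a flat free surface.

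The crucial step is to convert the hypothesis that the pressure is constant along the streamlines into a pointwise identity. Since by \eqref{COND1} the relative velocity $(u-c,w)$ is nowhere zero, the streamlines of the flow in the moving frame $x=X-ct$ are well-defined curves and the isobaric hypothesis is equivalent to
\[
(u-c)P_x+wP_z=0\qquad\text{in }\0_\eta .
\]
I then multiply the first equation of \eqref{E:T} by $u-c$, substitute $(u-c)P_x=-wP_z$ from the relation just written and $u_x=-w_z$ from the equation of conservation of mass, and divide by $(u-c)^2\neq0$; this leaves an equation of the form
\[
w_z=A(x,z)\,w\qquad\text{in }\0_\eta ,
\]
where $A$ is a continuous function on $\ov\0_\eta$, built out of $u$, $P$ and their first-order derivatives, so that $A\in C(\ov\0_\eta)$ by the regularity assumption \eqref{COND2}. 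Fixing $x$ and reading the last display as a \emph{linear} homogeneous ordinary differential equation for $z\mapsto w(x,z)$ on $[-d,\eta(x)]$, the no-flux condition \eqref{E:T1} supplies the Cauchy datum $w(x,-d)=0$; uniqueness for linear ODEs (equivalently, a Gr\"onwall estimate) then gives $w(x,z)=0$ for all admissible $z$, and since $x$ is arbitrary, $w\equiv0$ in $\0_\eta$.

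I expect the actual obstacle to be exactly the reduction above: one must first identify the isobaric condition with the relation $(u-c)P_x+wP_z=0$, and then carry out the short but somewhat delicate computation (using only the horizontal momentum balance, this relation, and incompressibility) that exhibits the \emph{linear-in-$w$} structure of the resulting equation; once that structure is in hand the flat-bed boundary condition does the rest. Rephrased through the stream function and the Dubreil--Jacotin height variable $h$ defined by $h(x,\psi)=z$, the same computation becomes a linear homogeneous ODE in the streamline variable $p=\psi$ for the quantity measuring the departure from laminar flow; explicitly it reads $\partial_p w=\Gamma(p)\,h_p^2\,w$, where $\Gamma=2\omega-\mathcal P'/\rho$ and $P=\mathcal P(\psi)$, and the flat bed provides a streamline on which that quantity vanishes. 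In the infinite-depth problem of Theorem \ref{MT:2} there is no such level available, which is precisely why nontrivial Gerstner-type waves survive and one must instead show that they have the explicit Lagrangian description.
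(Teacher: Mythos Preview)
Your argument is correct and considerably more direct than the paper's. You observe that the isobaric condition $(u-c)P_x+wP_z=0$, combined with the horizontal momentum balance and incompressibility, yields a linear homogeneous ODE $w_z=A(x,z)\,w$ along each vertical segment; the no-flux condition $w=0$ on $z=-d$ then forces $w\equiv0$ by Gr\"onwall, and the laminar structure follows immediately. This uses nothing beyond elementary ODE uniqueness and the $C^2$ regularity already assumed in \eqref{COND2}.

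The paper takes a much longer route. It passes to the hodograph height function $h(q,p)$, derives a hierarchy of algebraic constraints (Lemmas~\ref{L:1}--\ref{L:3} and the polynomial identity \eqref{rela}), and then argues by contradiction: assuming a non-laminar solution, it pins down $Q$, $\beta$, $\Gamma$ explicitly, extends $\Gamma$ and $h$ real-analytically across the bed level $p=-p_0$ into a strip where $h_q\equiv0$, and invokes analytic regularity for quasilinear elliptic equations together with unique continuation to conclude $h_q\equiv0$ everywhere. The payoff of this heavier machinery is not in Theorem~\ref{MT:1} itself but in Theorem~\ref{MT:2}: the same algebraic constraints, which your ODE argument bypasses entirely, are precisely what is needed in the deep-water case---where there is no bottom streamline to supply the Cauchy datum $w=0$---to identify the surviving non-laminar solutions as Gerstner's waves. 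Your approach gives the cleanest proof of Theorem~\ref{MT:1} in isolation; the paper's approach is designed so that most of the work carries over to the infinite-depth classification.
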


Besides laminar flow solutions, which are characterized by the fact that $\eta$ is constant and $(u,v,P)$ depend only upon the $z$ coordinate,
in the infinite depth case there is a family of explicit traveling wave solutions of the problem \eqref{E:T} and \eqref{E:T2}, which is due to Gerstner.
To present these special solutions we adopt a Lagrangian framework and describe the trajectories of each particle in the fluid:
\begin{equation}\label{Lag}
 (X(t, a, b), Z(t, a, b)):=\left(a-\frac{e^{kb}}{k}\sin(k(a-ct)),h_0+b+\frac{e^{kb}}{k}\cos(k(a-ct))\right)
\end{equation}
 for all  $a\in\R$, $b\leq b_0,$ and $t\geq0.$ 
Hereby $k>0$, $b_0\leq 0,$ $h_0\in\R$ is arbitrary, and   $c$ is the speed of the wave.
Each particle within the fluid is uniquely determined by a pair  $(a,b)\in\R\times (-\infty,b_0).$ 
The curve parametrized by $(X(t, \cdot, b_0), Z(t, \cdot, b_0))$ is the profile of the wave and is a  trochoid when $b_0<0$, respectively a cycloid when $b_0=0$ (see the figures in Chapter 4 of \cite{Con11}).
The latter curve has upward cusps, so that the waves cannot be extended for values of $b\geq0.$
 The  speed $c$ of the wave is given by
\[
c=\frac{-\omega+\sqrt{\omega^2+gk}}{k },
\]
 cf. \cite{AM12xx}. Our second main result is the following theorem.
\begin{thm}\label{MT:2}
 Assume that $(u,v,P,\eta)$ is a solution of \eqref{E:T} and \eqref{E:T2} which satisfies \eqref{COND1} and \eqref{COND2}.
If the pressure is constant along each  streamline, then  $(u,v,P,\eta)$ is one of the solutions described by \eqref{Lag}.
\end{thm}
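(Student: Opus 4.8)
The plan is to follow the same strategy as for Theorem \ref{MT:1}, namely to reformulate the free-boundary problem in terms of the stream function and to exploit the isobaric-streamline hypothesis to turn the hydrodynamic system into a single quasilinear elliptic equation, and then—because we are now in the infinite-depth case where the laminar-flow conclusion of Theorem \ref{MT:1} fails—to identify the resulting solutions explicitly with the Gerstner family \eqref{Lag}. Concretely, since $\sup_{\0_\eta}u<c$ the map $(x,z)\mapsto\psi(x,z)$ given by the stream function (with $\psi_z=u-c$, $\psi_x=-w$) is well defined, strictly monotone in $z$, and its level sets are exactly the streamlines. Writing $P$ as a function of $\psi$ alone (this is precisely the isobaric hypothesis), differentiating the first two equations of \eqref{E:T} and using \eqref{maco} in the form $\Delta\psi$-type identities, I expect to derive that $\psi$ satisfies a semilinear equation of the form $\Delta\psi = F(\psi)$ in $\0_\eta$, together with the decay \eqref{E:T2} and the two boundary conditions on $z=\eta(x)$ (Bernoulli-type relation plus $\psi\equiv\mathrm{const}$ there).

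The second step is to pass to the Lagrangian (or, equivalently, semi-Lagrangian) variables $(q,p)$ with $q$ a reparametrization of the horizontal variable and $p=-\psi$, following the classical Dubreil-Jacotin change of variables; under \eqref{COND1} this transformation is a $C^2$-diffeomorphism onto a fixed strip $\{p_0<p<0\}\times(\text{period})$ or half-plane, and the regularity \eqref{COND2} is preserved. In these coordinates the height function $h(q,p):=z$ satisfies a quasilinear elliptic equation, and here I would invoke the regularity result of Moser \cite{Mo66} quoted in the introduction to upgrade $h$ to be real-analytic in the interior. The isobaric condition forces the coefficients/right-hand side to depend only on $p$, which should make the equation for $h$ separable; combined with periodicity in $q$ and the decay as $p\to-\infty$, a Fourier expansion in $q$ collapses to a single mode, yielding $h(q,p)=h_0+\frac{p}{?}+\text{(exponential in }p)\cos(kq)$-type formula, which is exactly \eqref{Lag} after renaming constants and reading off the dispersion relation $c=(-\omega+\sqrt{\omega^2+gk})/k$ from the surface Bernoulli condition.

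The main obstacle I anticipate is ruling out the laminar alternative and, more importantly, controlling the behavior as $p\to-\infty$ (equivalently $z\to-\infty$): one must show that the only bounded solutions of the linearized/separated problem on the infinite half-strip that respect \eqref{E:T2} are the single-exponential-mode ones, with no polynomially growing or constant-in-$q$ corrections beyond the trivial laminar piece, and then show that this trivial laminar piece is incompatible with a genuinely nonconstant $\eta$ under the isobaric hypothesis (this is where the argument genuinely diverges from Theorem \ref{MT:1}, since here nontrivial $\eta$ is allowed). A secondary technical point is verifying that the nonlinear surface condition, once the interior form of $h$ is known, pins down \emph{all} the free constants ($k>0$, $b_0\le 0$, $h_0\in\R$, and $c$) and forces the upward-cusp restriction $b_0\le 0$; I expect this to reduce to the elementary analysis of the cycloid/trochoid profile already recalled after \eqref{Lag}. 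Once those two points are handled, the identification with \eqref{Lag} and the stated formula for $c$ follow by direct substitution.
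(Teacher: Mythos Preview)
Your opening reformulation (stream function, $P=P(\psi)$, Dubreil--Jacotin change of variables, height function $h$, analyticity via Moser) matches the paper and is fine. The genuine gap is at what you call ``the second step'': you propose that the isobaric condition makes the quasilinear equation for $h$ \emph{separable}, and that a Fourier expansion in $q$ then collapses to a single mode of the form
\[
h(q,p)=h_0+\tfrac{p}{c}+(\text{exponential in }p)\cos(kq).
\]
Both parts of this expectation are wrong. First, the governing equation
\[
(1+h_q^2)h_{pp}-2h_ph_qh_{pq}+h_p^2h_{qq}-\Gamma' h_p^3=0
\]
is genuinely quasilinear in $\nabla h$, so periodicity in $q$ and decay in $p$ give you no mode-by-mode decoupling; there is no linear superposition principle to appeal to. Second, and more fundamentally, the Gerstner solution is \emph{not} a single Fourier mode in the variable $q=x$. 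The formulas \eqref{Lag} are written in \emph{Lagrangian labels} $(a,b)$, and the map $a\mapsto X=a-\frac{e^{kb}}{k}\sin(ka)$ must be inverted to express $z$ as a function of $x$; the resulting $h(q,p)$ is an implicitly defined trochoid with infinitely many harmonics in $q$. So even if your separation argument could be made to work, the answer you are aiming for is not the one you wrote.

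What the paper actually does is quite different and is driven by the \emph{algebraic} consequences of the isobaric hypothesis already recorded in Lemmas \ref{L:1}--\ref{L:2}. The identity $1/h_p=Q'(p)h+\beta(p)$, combined with the Bernoulli relation, produces a cubic in $h$ with $p$-dependent coefficients $a_i(p)$; since $h(\cdot,p)$ is non-constant and real-analytic, all $a_i$ vanish. This forces $Q'$ constant, gives $\beta$ explicitly, and yields an ODE for $\Gamma$ that is solved in closed form. Substituting back gives (Lemma \ref{L:4}) a first-order relation in $q$ of the form
\[
\Bigl(h+\tfrac{\beta}{A}\Bigr)^2h_q^2+\Bigl(h+\tfrac{\beta}{A}+\tfrac{\alpha}{A^2}\Bigr)^2=K(p)^2,
\]
which is exactly the implicit equation of a trochoid and is solved by the explicit trigonometric parametrization \eqref{par} in an auxiliary variable $s$. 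One then uses Picard--Lindel\"of to match $h$ with this parametrization up to a $p$-dependent shift $\delta(p)$, shows that incompressibility forces $\delta$ constant, uses the far-field condition \eqref{E:T2} to identify the wave speed, and finally changes variables $(s,p)\mapsto(a,b)$ to obtain \eqref{Lag}. None of this is a Fourier argument; it is an ODE/algebraic-geometry argument along each streamline, and the trochoid appears because the constraint is a quadratic, not because any spectral truncation occurs.
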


\section{Equivalent formulations and a priori properties of solutions}\label{S2}
In order to investigate the steady flow problems for finite depth and deep water waves we find  equivalent formulations which are more suitable to handle.
First, we reformulate the problem in terms of a   stream function $\psi$.
To deal with both finite and infinite depth  cases at once, we define the stream function by the following relation
\[
\psi(x,z):=-\int_{z}^{\eta(x)}(u(x,s)-c)\, ds \qquad\text{for $(x,z)\in\ov\0_\eta.$}
\]
Then, it follows by direct computations that $\nabla\psi=(-w,u-c).$
Consequently, the streamlines of the steady flow, which coincide with the particle paths,  are the  level curves of $\psi.$
Indeed, if the curve $(x(t),z(t))$ describes the motion of a fluid particle, that is $(x'(t),z'(t))=(u(x(t),z(t))-c,w(x(t),z(t)))$, 
then
\[
\frac{d}{dt} \psi(x(t),z(t))=\left\langle\nabla \psi(x(t),z(t))\big|(u(x(t),z(t))-c,w(x(t),z(t)))\right\rangle=0 \qquad\text{for all $t\geq0$.}
\] 
Furthermore, using the implicit function theorem and  \eqref{COND1}  we see that  the level   curves of $\psi$ are in fact graphs of periodic functions defined on the entire  real line.
Additionally, we compute that
\[
\Delta \psi=\psi_{xx}+\psi_{zz}=u_z-w_x=\gamma\qquad\text{in $\0_\eta$,}
\]
the function $\gamma=\gamma(x,z)$ being the vorticity of the flow.
By \eqref{E:T1}, for   waves over a flat bed $\psi_x=-w=0$ on $x=-d,$ and we deduce that there is a positive constant $p_0$ such that $\psi=p_0$ on $z=-d.$
Also, observe that $\psi=0$ on $z=\eta(x).$
 
Condition \eqref{COND1} enables us  also  to introduce new variables by means of  the  hodograph transformation $H:\0_\eta\to\0$,
defined by
\[
H(x,z):=(q,p)(x,z):=(x,-\psi(x,z)),\qquad (x,z)\in\ov \0_\eta,
\]
whereby $\Omega:=\{(q,p): -p_0<p<0 \}$  and $\Omega:=\{(q,p): p<0 \}$ for the finite and infinite depth case, respectively. 
The mapping $H$ is a diffeomorphism and the following relations are satisfied 
\begin{align*}
  \left( \begin{array}{cccc}
q_x&q_z \\
p_x&p_z
 \end{array} \right)= \begin{pmatrix}
1 \,  & \, 0\\
w \, & \, c-u
 \end{pmatrix}   
\qquad\text{and}\qquad 
\left( \begin{array}{cccc}
x_q&x_p \\
z_q&z_p
 \end{array} \right)=\left( \begin{array}{ccc}
1 \, & \, 0\\[1ex]
\displaystyle\frac{w}{u-c} \, & \,  \displaystyle\frac{1}{c-u} 
 \end{array} \right).
\end{align*}
A simple computation shows now that $\p_q(\gamma\circ H^{-1})=0$ in $\0,$ meaning that there exists a continuously differentiable  function $\gamma=\gamma(p)$ such that
$\gamma(x,z)=\gamma(-\psi(x,z))$ for all $(x,z)\in\0_\eta.$  
Finally, defining the  hydraulic head  by the expression
\begin{equation*}
E:=\frac{(u-c)^2+w^2}{2}+(g-2\omega c)z+\frac{P}{\rho}-2\omega \psi+\int_0^{-\psi}\gamma(s)\, ds\qquad \text{in $\0_\eta$,}
\end{equation*}
one can easily show  that there exists a constant $C$ such that  $E=C$ in $\0_\eta.$
Introducing the primitive $\Gamma$ of $\gamma$ by the relation
\[
\Gamma(p):=\int_0^p\gamma(s)\,ds-C,
\]
we find that $(\eta,\psi)$ solves the following problem: 
 \begin{equation}\label{S:1}
\left\{
\begin{array}{rllllllll}
\Delta \psi&=&\gamma(-\psi) &\qquad \text{in $ \Omega_\eta,$}\\
|\nabla\psi|^2/2+(g-2\omega c)z+P/\rho-2\omega \psi+\Gamma&=&0  &\qquad \text{in $ \Omega_\eta$},
\end{array}
\right.
\end{equation}
supplemented by 
\begin{equation}\label{S:2}
\begin{array}{rllllllll}
\psi&=&0  &\qquad \text{on $ z=-d$}
\end{array}
\end{equation}
for  waves traveling over a flat bed, respectively  
\begin{equation}\label{S:3}
\begin{array}{rllllllll}
\nabla \psi&\to&(0,-c)  &\qquad \text{for $ z\to-\infty$ uniformly in $x$}
\end{array}
\end{equation}
for deep water waves.
In fact, it is not difficult to show that problems \eqref{E:T}-\eqref{E:T1} and \eqref{S:1}-\eqref{S:2}  (resp. $\{\eqref{E:T},\eqref{E:T2}\}$ and $\{\eqref{S:1}, \eqref{S:3}\}$) 
are equivalent in the sense that each solution of the first problem corresponds  
to  a unique solution of the second one. 
Since we consider only  waves having the property that the pressure is constant along the streamlines we 
 find a function $P\in C^2([-p_0,0])$  in the finite depth case (resp. $P\in C^2((-\infty,0]) $  in the infinite depth case)
with the  property that $P(x,z)=P(-\psi(x,z))$ for all $(x,z)\in\ov \0_\eta.$

To obtain a second equivalent formulation of the original problems, we introduce the height function $h:\0\to\R$ by the relation
\[
h(q,p)=z\qquad\text{for $(q,p)\in\0.$}
\]
It follows readily from the definition of $h$ and of the coordinates transformation $H$ that $h$ solves the following equations 
\begin{equation}\label{R:1}
\left\{
\begin{array}{rllll}
(1+h_q^2)h_{pp}-2h_ph_qh_{pq}+h_p^2h_{qq}-\Gamma' h_p^3&=&0& \qquad\text{in $\0$},\\[1ex]
\displaystyle\frac{1+h_q^2}{2h_p^2}+(g-2\omega c)h+\displaystyle\frac{P}{\rho}+2\omega p+\Gamma&=&0&\qquad\text{in $\0$},
\end{array}
\right.
\end{equation}
and 
\begin{equation}\label{R:2}
\begin{array}{rllll}
h&=&-d&\qquad\text{on $p=-p_0$}
\end{array}
\end{equation}
for waves traveling over a flat bed, respectively
\begin{equation}\label{R:3}
\begin{array}{rllll}
\nabla h&\to &(0,1/c)&\qquad\text{on $p\to-\infty$ uniformly in $q$,}
\end{array}
\end{equation}
in the infinite depth case. 
The problems \eqref{R:1}-\eqref{R:2} and \eqref{S:1}-\eqref{S:2}  (resp. $\{\eqref{R:1}, \eqref{R:3}\}$ and $\{\eqref{S:1}, \eqref{S:3} \}$) 
are equivalent in the same sense defined before.
We note that \eqref{COND1} becomes
\begin{equation}\label{COND1'}
\inf_{\0} h_p>0,
\end{equation}
while, due to the fact that $\psi(q,h(q,p))=-p$ for $(q,p)\in\0$, we see that the streamlines of the steady flow are parametrized by the functions $h(\cdot,p).$ 

To simplify our notations, we set
\begin{equation}\label{notations}
\alpha:=g- 2 \omega c\qquad\text{and}\qquad Q(p):=\frac{P(p)}{\rho}+2\omega p \qquad\text{for all $p$ with $(0,p)\in\ov\0.$}  
\end{equation}

We establish now some properties which are a priori satisfied by the solutions of the system  \eqref{E:T}. 
\begin{lemma}\label{L:1}
 Let $(u,w, \eta, P)$ be a periodic solution of \eqref{E:T} satisfying \eqref{COND1} and \eqref{COND2}, and assume that the pressure $P$ is constant along the streamlines.
Then, there exists a continuously differentiable function $\beta=\beta(p)$ such that the corresponding height function $h$ satisfies 
\begin{align}\label{has}
 \frac{1}{h_p(q,p)}=Q'(p)h(q,p)+\beta(p) \qquad\text{for all $(q,p)\in\0$.}
\end{align}
\end{lemma}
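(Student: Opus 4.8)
The plan is to derive \eqref{has} algebraically from the transformed system \eqref{R:1}, the isobaricity hypothesis entering only through the fact that $P$, and hence the function $Q$ introduced in \eqref{notations}, depends on $p$ alone (recall that $\Gamma=\Gamma(p)$ by construction). With the notation of \eqref{notations}, the second (Bernoulli) equation of \eqref{R:1} reads
\[
\frac{1+h_q^2}{2h_p^2}+\alpha h+Q(p)+\Gamma(p)=0\qquad\text{in $\0$.}
\]
I would first differentiate this identity separately with respect to $q$ and with respect to $p$. The $p$-derivative expresses $(1+h_q^2)h_{pp}/h_p^3$ in terms of $h_{pq}$, $h_p$, $Q'$, and $\Gamma'$; inserting this into the first (quasilinear) equation of \eqref{R:1} after dividing the latter by $h_p^3$ --- which is licit since $h_p>0$ by \eqref{COND1'} --- makes the $\Gamma'$-terms cancel and yields a relation containing only the second-order derivatives $h_{qq}$ and $h_{pq}$, namely
\[
\frac{h_{qq}}{h_p}-\frac{h_qh_{pq}}{h_p^2}+\alpha h_p+Q'(p)=0\qquad\text{in $\0$.}
\]

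Next I would multiply this last relation by $h_q/h_p$ and subtract it from the $q$-derivative of the Bernoulli equation. All second-order terms cancel, leaving $h_{pq}/h_p^3+Q'(p)h_q/h_p=0$, that is,
\[
\frac{h_{pq}}{h_p^2}+Q'(p)\,h_q=0\qquad\text{in $\0$,}
\]
which is precisely $\partial_q\bigl(1/h_p-Q'(p)h\bigr)=0$. Since $\0$ is a connected strip, for each admissible $p$ the function $q\mapsto 1/h_p(q,p)-Q'(p)h(q,p)$ is constant; calling this constant $\beta(p)$ gives \eqref{has}. Finally, evaluating at a fixed $q_0$ shows $\beta(p)=1/h_p(q_0,p)-Q'(p)h(q_0,p)$, which is continuously differentiable because $h$ is of class $C^2$ (by \eqref{COND2}), $h_p$ is bounded away from zero (by \eqref{COND1'}), and $Q\in C^2$ since $P\in C^2$.

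The proof amounts to this computation, so the only genuine difficulty is the bookkeeping in the two successive eliminations --- first removing $h_{pp}$ via the quasilinear equation, then removing $h_{qq}$ and $h_{pq}$ via the multiplication by $h_q/h_p$ --- which must be carried out without sign errors. The conceptual point worth isolating is that the isobaricity assumption is used in exactly one place: were $P$ to depend on $q$ as well, differentiating the Bernoulli relation in $q$ would produce a term $P_q/\rho$ that cannot be absorbed into the other terms, destroying the divergence-form identity $\partial_q\bigl(1/h_p-Q'(p)h\bigr)=0$ --- and with it \eqref{has}.
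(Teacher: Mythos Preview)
Your argument is correct and follows essentially the same route as the paper: differentiate the Bernoulli relation in $p$ and $q$, combine the $p$-derivative with the quasilinear equation divided by $h_p^3$ to eliminate $h_{pp}$ and the $\Gamma'$-term, then take the appropriate linear combination with the $q$-derivative to isolate $h_{pq}/h_p^2+Q'h_q=0$. Your additional remarks on the $C^1$-regularity of $\beta$ (which the paper leaves implicit) and on exactly where the isobaricity hypothesis enters are useful clarifications.
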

\begin{proof}
 First, we note that the height function $h$ solves \eqref{R:1}. 
Differentiating the second  equation of \eqref{R:1} with respect to $p$ and $q,$ respectively, we obtain in $\0$ the following relations
\begin{align}\label{wp1}
 \frac{h_ph_qh_{pq}-(1+h_q^2)h_{pp}}{h_p^3}+\alpha h_p+Q'+\Gamma'=0.
\end{align}
and
\begin{align}\label{wp2}
 \frac{h_ph_qh_{qq}-(1+h_q^2)h_{pq}}{h_p^3}+\alpha h_q=0  
\end{align}
 We build the sum of  \eqref{wp1} and the first equation in \eqref{R:1} to find that
\begin{align}\label{wp3}
 \frac{h_p^2h_{qq}-h_p h_q h_{pq}}{h_p^3}+\alpha h_p+Q'=0\qquad\text{in $\0$.}
\end{align}
Next, we multiply \eqref{wp3} by $h_q$, \eqref{wp2} by $h_p$ and subtracting these new identities  we get
\[
 \frac{h_{pq}}{h_p^2}+Q'h_q=0,
\]
or equivalently
\[
\frac{\p}{\p q} \left( \frac{1}{h_p}- Q'h_q\right)=0.
\]
This yields the desired assertion \eqref{has}.
\end{proof}

As a further result we prove the following lemma, which enables us later on to identify  the characteristics of the flow.
\begin{lemma}\label{L:2} Under the same assumptions as in Lemma \ref{L:1},  the height function $h$, corresponding to a solution of \eqref{E:T}, satisfies 
\begin{align}\label{rela}
 a_3(p)h^3+a_2(p)h^2+a_1(p)h+a_0(p)=0 \qquad\text{in $\0$,}
\end{align}
whereby  
\begin{equation}\label{ais}
\begin{array}{lllllllll}
 a_0(p)&:=&-\alpha \beta+2\beta\beta'(Q+\Gamma)-Q' \beta^2-\beta^2(Q'+\G'),\\[1ex]
a_1(p)&:=&-\alpha Q'-2Q' \beta(Q'+\G')+2(Q+\G)(Q''\beta+Q'\beta'),
-2Q'^2\beta+2\alpha\beta\beta',\\[1ex]
a_2(p)&:=&-Q'^3-(Q'+\G')Q'^2+2\alpha(Q''\beta+Q'\beta']+2Q'Q''(Q+\G),\\[1ex]
a_3(p)&:=&2\alpha Q' Q''
\end{array}
\end{equation}
for all $p$ with $(0,p)\in\0.$
\end{lemma}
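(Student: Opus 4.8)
The plan is to eliminate from the system \eqref{R:1} both the tangential derivative $h_q$ and all second-order derivatives of $h$, so as to be left with a polynomial identity in $h$ alone whose coefficients depend only on $p$. The only structural inputs needed are the height equation \eqref{R:1} itself, the Bernoulli-type relation (its second equation) with the notation \eqref{notations}, and the first-order relation \eqref{has} furnished by Lemma \ref{L:1}; everything else is algebraic elimination.

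First I would record the consequences of \eqref{has}. Set $m:=Q'h+\beta$; by \eqref{COND1'} we have $m=1/h_p>0$ on $\0$, so $m$ never vanishes. Then \eqref{has} reads $h_p=1/m$ and $1/h_p^2=m^2$, and differentiating \eqref{has} with respect to $p$ and to $q$ gives
\begin{align*}
h_{pp}&=-h_p^2\big(Q''h+Q'h_p+\beta'\big),\\
h_{pq}&=-h_p^2\,Q'h_q,
\end{align*}
so that, after using $h_p=1/m$, both $h_{pp}$ and $h_{pq}$ are expressed through $h$ and $h_q$ only. Next, inserting $1/h_p^2=m^2$ into the second equation of \eqref{R:1} yields
\[
1+h_q^2=-\frac{2(\alpha h+Q+\G)}{m^2},
\]
which expresses $h_q^2$ through $h$ and $p$.

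Now I would substitute all of this into \eqref{wp1} (the $p$-derivative of the Bernoulli relation, which is an identity not yet used in the derivation of Lemma \ref{L:1}). After replacing $h_{pq}$ by $-h_p^2Q'h_q$, the tangential derivative enters \eqref{wp1} only through $h_q^2$, which has just been eliminated; a direct computation then shows that $h_q$ drops out entirely — the terms proportional to $(\alpha h+Q+\G)h_p^2$ coming from $-Q'h_q^2$ and from $(1+h_q^2)h_{pp}$ cancel — leaving an identity involving only $h$, the coefficient functions $Q,Q',Q'',\G',\beta,\beta'$, the constant $\alpha$, and the factor $m=Q'h+\beta=1/h_p$ in the denominators. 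Clearing these denominators by multiplying through by a suitable power of $1/h_p=Q'h+\beta$ and collecting the powers of $h$ produces a polynomial relation of the form \eqref{rela}, and comparing coefficients gives exactly \eqref{ais}. (Incidentally the resulting polynomial is divisible by $Q'h+\beta$, so there is even a quadratic relation in $h$; but \eqref{rela} is the form obtained directly, and it is the one used afterwards.)

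The computation is elementary, and its only real difficulty is the bookkeeping: one must track several cancellations when substituting into \eqref{wp1} and then match a cubic's worth of coefficients against \eqref{ais}. Two points deserve care. First, all the manipulations are valid pointwise on all of $\0$, since $h_p>0$ there by \eqref{COND1'}, so $m=Q'h+\beta$ never vanishes and division by it is legitimate; in particular one need not distinguish the regions $\{h_q=0\}$ and $\{h_q\neq0\}$. Second, because $h_q$ is removed only via $h_q^2$, one has to verify honestly that it does not reappear with an odd power — which is precisely what the cancellation of the $(\alpha h+Q+\G)h_p^2$-terms guarantees. I expect this coefficient-matching step to be the main obstacle, purely in the sense of being lengthy rather than conceptually hard.
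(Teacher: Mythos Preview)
Your proposal is correct and uses the same ingredients as the paper: the relation \eqref{has} together with its $p$- and $q$-derivatives, and the Bernoulli identity (second equation of \eqref{R:1}) together with its $p$-derivative, combined so that only $h_q^2$ survives and can be eliminated. The paper organizes the same computation slightly differently---rather than substituting everything into \eqref{wp1}, it computes $h_qh_{pq}$ in two ways (once from the $q$-derivative of \eqref{has} followed by elimination of $h_q^2$, once by differentiating the expression $h_q^2=-2(\alpha h+Q+\Gamma)/(Q'h+\beta)^2-1$ with respect to $p$) and equates them---which avoids ever writing down $h_{pp}$ explicitly; but the two routes are algebraically equivalent. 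One small inaccuracy: your parenthetical that \eqref{wp1} was ``not yet used in the derivation of Lemma~\ref{L:1}'' is false (it is used there to obtain \eqref{wp3}), though this has no bearing on the validity of your argument.
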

\begin{proof}
We differentiate first \eqref{has} with respect to $q$ and multiply the relation we obtain by $h_q$ to find that
\begin{align*}
 h_qh_{pq}=-Q' h_q^2h_p^2=\frac{Q'}{(Q' h+\beta)^2}+\frac{2(\alpha h+Q+\G)Q'}{(Q'h+\beta)^4}\qquad\text{in $\0$,}
\end{align*}
the last identity being a consequence of the second relation in \eqref{R:1} and \eqref{has}.
On the other hand, the latter relations yield
\begin{align*}
 h_q^2=-2\frac{\alpha h+Q+\G}{(Q' h+\beta)^2}-1,
\end{align*}
and differentiating this expression with respect to $p$ we arrive at
\begin{align*}
 h_qh_{pq}=-\frac{\alpha+(Q'+\G')(Q' h+\beta)}{(Q' h+\beta)^3}+2\frac{\alpha h +Q+\G}{(Q' h+\beta)^4}\left[Q'+(Q''h+\beta')(Q'h+\beta)\right].
\end{align*}
Identifying the two expressions we found for $h_qh_{pq}$, we obtain the desired relations. 
\end{proof}

We are interested here to determine the solutions of \eqref{E:T}-\eqref{E:T1} (resp. \eqref{E:T} and \eqref{E:T2}) which are not laminar (that is $h_q$ does not vanish in $\0$).
For these solutions we obtain the following restriction on the wave speed $c$.
\begin{lemma}\label{L:3} Assume that $(u,w, \eta, P)$ is a periodic solution of \eqref{E:T} satisfying \eqref{COND1} and \eqref{COND2}. 
Furthermore, we assume that the pressure $P$ is constant along the streamlines and the flow is not laminar, that is $h_q\not\equiv0$ in $\0.$ 
Then, we have that 
 $\alpha=g-2\omega c\neq0.$  
\end{lemma}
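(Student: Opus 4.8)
The plan is to argue by contradiction: suppose the flow is not laminar but $\alpha=g-2\omega c=0$. Under this assumption several of the coefficients in Lemma \ref{L:2} simplify dramatically. In particular $a_3(p)=2\alpha Q'Q''\equiv 0$, so \eqref{rela} collapses to the quadratic (at most) relation $a_2(p)h^2+a_1(p)h+a_0(p)=0$ in $\0$, and with $\alpha=0$ the coefficients become $a_2=-Q'^3-(Q'+\Gamma')Q'^2+2Q'Q''(Q+\Gamma)$, $a_1=-2Q'\beta(Q'+\Gamma')+2(Q+\Gamma)(Q''\beta+Q'\beta')-2Q'^2\beta$, and $a_0=2\beta\beta'(Q+\Gamma)-Q'\beta^2-\beta^2(Q'+\Gamma')$. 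The key point is that $h(\cdot,p)$ is nonconstant for some fixed value of $p$ (this is exactly what $h_q\not\equiv 0$ together with the structure of the level curves gives us, since each streamline is a graph of a periodic nonconstant function unless the flow is laminar). A nonconstant continuous function of $q$ cannot satisfy a polynomial identity in $h$ with coefficients depending only on $p$ unless all those coefficients vanish identically. Hence $a_2(p)=a_1(p)=a_0(p)=0$ for every $p$ with $(0,p)\in\ov\0$.

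Next I would exploit these three identities to force a contradiction. From $a_0\equiv 0$ we get $\beta^2(Q'+\Gamma')+Q'\beta^2=2\beta\beta'(Q+\Gamma)$, i.e. $\beta\big[\beta(2Q'+\Gamma')-2\beta'(Q+\Gamma)\big]=0$. One should first rule out $\beta\equiv 0$ on an open set: by Lemma \ref{L:1}, $1/h_p=Q'h+\beta$, and \eqref{COND1'} says $\inf_\0 h_p>0$, so $1/h_p$ is bounded; if $\beta(p_*)=0$ then $1/h_p(q,p_*)=Q'(p_*)h(q,p_*)$, which combined with the second equation of \eqref{R:1} (which after using \eqref{has} reads $h_q^2=-2(\alpha h+Q+\Gamma)/(Q'h+\beta)^2-1$, now with $\alpha=0$) should pin down the behavior; alternatively, $\beta\equiv0$ on an interval would, via the other relations, again force $h_q\equiv 0$. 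So generically $\beta\not\equiv0$ and we may divide, obtaining the ODE $\beta(2Q'+\Gamma')=2\beta'(Q+\Gamma)$ relating $\beta$, $Q$, $\Gamma$.

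Then I would feed this relation into $a_2\equiv 0$ and $a_1\equiv 0$. The equation $a_2=0$, namely $Q'\big[Q'^2+(Q'+\Gamma')Q'-2Q''(Q+\Gamma)\big]=0$, splits into the case $Q'\equiv0$ on an interval — which by \eqref{has} gives $h_p=1/\beta(p)$, a function of $p$ alone, hence $h(q,p)=q\cdot0+\text{(function of }p)$ after integration, contradicting $h_q\not\equiv0$ — and the case $Q'^2+(Q'+\Gamma')Q'-2Q''(Q+\Gamma)=0$ on that interval. Combining the latter with the ODE from $a_0=0$ and with $a_1=0$, I expect a short elimination to produce an algebraic contradiction (for instance forcing $Q+\Gamma\equiv 0$, which again via $h_q^2=-2(Q+\Gamma)/(Q'h+\beta)^2-1=-1<0$ is impossible). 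The main obstacle is organizing the case distinctions cleanly: at each stage one must separate the "degenerate" possibilities ($\beta\equiv0$, $Q'\equiv0$, $Q''\equiv0$, $Q+\Gamma\equiv0$ on subintervals) and show each of them already forces laminarity or an outright sign contradiction, while in the remaining "generic" branch the three polynomial-coefficient equations are over-determined and cannot hold simultaneously. Since all the functions involved are $C^1$ (indeed as smooth as the regularity \eqref{COND2} allows) and $\0$ is connected, a relation valid on an open subinterval propagates, so it suffices to derive the contradiction locally. The upshot in every branch is $h_q\equiv0$, i.e. the flow is laminar, contrary to hypothesis; therefore $\alpha\neq0$.
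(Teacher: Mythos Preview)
Your strategy has two genuine gaps.

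First, the step in the $Q'\equiv 0$ branch is wrong: from $h_p=1/\beta(p)$ you can only conclude $h(q,p)=\int^p 1/\beta(s)\,ds + G(q)$ for some function $G$; it does \emph{not} follow that $h$ is a function of $p$ alone. (This branch can be repaired by also invoking the second equation of \eqref{R:1}, which with $\alpha=0$ forces $h_q^2$ to depend on $p$ only, hence to be constant, hence zero by periodicity; but you did not do this.)

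Second, and more seriously, your claim that in the generic branch ``the three polynomial-coefficient equations are over-determined'' is false. With $\alpha=0$ and assuming $\beta\neq 0$, $Q+\Gamma\neq 0$, the relation $a_0=0$ gives $2\beta'(Q+\Gamma)=\beta(2Q'+\Gamma')$; substituting this expression for $\beta'$ into $a_1=0$ yields precisely $2(Q+\Gamma)Q''=Q'(2Q'+\Gamma')$, which is exactly $a_2=0$ (after factoring out $Q'$). So $a_0=a_1=0$ already imply $a_2=0$: you have only two independent conditions, not three, and there is no purely algebraic over-determination. The ``short elimination'' you expect does not materialise, and you would need a further input (periodicity, a sign argument, or similar) that you have not supplied.

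The paper bypasses Lemma~\ref{L:2} entirely for this step. With $\alpha=0$ the second equation of \eqref{R:1}, combined with \eqref{has}, reads $(1+h_q^2)(Q'h+\beta)^2+2(Q+\Gamma)=0$. Differentiating once in $q$ and dividing by $Q'h+\beta>0$ gives
\[
h_q\Big[h_{qq}(Q'h+\beta)+(1+h_q^2)Q'\Big]=0.
\]
Fix $p$ with $h(\cdot,p)$ nonconstant; by real-analyticity of the streamlines the bracket must vanish identically, so $h_{qq}=-Q'(1+h_q^2)/(Q'h+\beta)$. If $Q'(p)=0$ this forces $h_{qq}(\cdot,p)\equiv 0$, hence $h_q(\cdot,p)\equiv 0$ by periodicity, a contradiction. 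If $Q'(p)\neq 0$ then $h_{qq}(\cdot,p)$ has a fixed sign, again contradicting periodicity. This is a one-line convexity/periodicity argument; no case tree over $a_0,a_1,a_2$ is needed.
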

\begin{proof} Let us assume by contradiction that $\alpha=0.$
Then, the second relation of \eqref{R:1} is equivalent to
\[
(1+h_q^2)(Q'h+\beta)^2+2(Q+\G)=0\qquad\text{in $\0$}.
\]
We differentiate this relation with respect to $q$ and, since  $Q'h+\beta>0$, cf. \eqref{COND1'}, we arrive at 
\[
h_qh_{qq}(Q'h+\beta)+h_q(1+h_q^2)Q'=0\qquad\text{in $\0$}.
\]
We fix now $p<0 $  such that $h(\cdot,p)$ is not a constant function.
Since $h(\cdot,p)$ is a real analytic function, cf. \cite{AC11, BM11}, we find that
\[
h_{qq}(q,p)=-Q'(p)\frac{1+h_q^2(q,p)}{Q'(p)h(q,p)+\beta(p)}\qquad \text{ for all $q\in\R$.}
\]
If $Q'(p)=0,$ then $h_{qq}(q,p)=0$ for all $q\in\R, $ meaning that $h_q(q,p)=0$ for all $q\in\R $ and contradicting our assumption.  
On the other hand, if $Q'(p) \neq0$, then $h_{qq}(\cdot, p)$ has the sign of $-Q'(p)$ on the whole line, in contradiction with the periodicity of $h_q(\cdot,p_0).$
In conclusion, our assumption was false and the proof is complete.
\end{proof}

\section{Proof of Theorem \ref{MT:1}}\label{S3}
The proof of Theorem \ref{MT:1} follows by contradiction.
Assume thus that there exists a tuple  $(u,v,P,\eta)$ which satisfies all the assumptions of Lemma \ref{L:3} and the boundary condition \eqref{E:T1}. 
We observe that in this case  the restriction to the flow of being non-laminar is equivalent to saying that $\eta=h(\cdot,0)$ is not constant. 
This follows easily from \eqref{R:1} and \eqref{R:2} by means of elliptic maximum principles.
Without restricting the generality, we may assume that
\[
h_q(\cdot,p)\not\equiv0 \qquad\text{for all $p\in(-p_0,0].$}
\]
Indeed, if this is not the case let
\[
\0'=\{(q,p)\in\0\,:\, \text{$h(\cdot,p)$ is not constant}\}.
\]
Then, in $\0\setminus\0'$ the flow is laminar and $h$ is constant on the lower boundary of $\0'.$
Thus, by choosing some other constants $p_0$ and $d$,  we may assume that the height function $h$ associated to our solution 
satisfies \eqref{R:1} in $\0'$ and \eqref{R:2} on the lower boundary of  $\0'.$

Applying Lemma \ref{L:2} we obtain that
\begin{equation}\label{gaucho}
 a_3=a_2=a_1=a_0=0\qquad\text{in $[-p_0,0]$.}
\end{equation}
 We exploit now this relation to arrive  at a contradiction. 
First, from Lemma \ref{L:3} and $a_3\equiv0$, we obtain that
\begin{equation}\label{Pres}
Q(p)=Ap+B 
\end{equation}
for all $p\in[-p_0,0]$ and with a nonzero constant $A$ and some $B\in\R$.
Indeed, if $A=0$, then we infer from \eqref{has} that $h$ has to be laminar in $\0.$ 
Moreover it follows readily from $a_2=0$ that there exists a constant $C_0$ such that  
\begin{equation}\label{beta}
\beta(p)=\frac{A}{2\alpha}\Gamma(p)+\frac{A^2}{\alpha}p+C_0 
\end{equation}
for all $p\in[-p_0,0].$
Lastly, exploiting the fact that $a_1=0,$ we identify  a differential equation for $\Gamma$  
\begin{equation}\label{gamma}
 (c_0\Gamma+c_1)\Gamma'=-(c_2\Gamma+c_3)
\end{equation}
in $[-p_0,0],$ 
the constants $c_i$ being given by the following expressions 
\begin{equation}\label{ci}
c_0:=\frac{A^2}{2\alpha},\quad c_1:=\frac{A^2B}{\alpha}-AC_0,\quad c_2:=\frac{A^3}{\alpha},\quad c_3:=\frac{2A^3B}{\alpha}-\alpha A-2A^2C_0.
\end{equation}
We also need the following  result.
\begin{lemma}\label{L:4}
We have that
\begin{equation}\label{id}
 \left(h+\frac{\beta}{A}\right)^2h_q^2+\left(h+\frac{\beta}{A}+\frac{\alpha}{A^2}\right)^2=-\frac{1}{A^2}\left(\Gamma+\frac{c_3}{c_2}\right)
\end{equation}
 in $\0.$
\end{lemma}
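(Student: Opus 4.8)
The plan is to derive \eqref{id} purely algebraically from the normal-form relations already obtained in Section \ref{S3}, without differentiating anything new. We have at our disposal: the second equation of \eqref{R:1}, which after inserting \eqref{has} and $\alpha\neq0$ (Lemma \ref{L:3}) reads
\begin{equation*}
 h_q^2 = -2\,\frac{\alpha h+Q+\Gamma}{(Q'h+\beta)^2}-1,
\end{equation*}
together with $Q(p)=Ap+B$ and $Q'=A\neq0$ from \eqref{Pres}, the formula \eqref{beta} for $\beta$, and the ODE \eqref{gamma}–\eqref{ci} for $\Gamma$. First I would substitute $Q'=A$ into the displayed identity to get $(Q'h+\beta)^2 = A^2\big(h+\beta/A\big)^2$, so that
\begin{equation*}
 \Big(h+\frac{\beta}{A}\Big)^2 h_q^2 = -\frac{2(\alpha h+Q+\Gamma)}{A^2} - \Big(h+\frac{\beta}{A}\Big)^2 .
\end{equation*}
Moving the last term to the left-hand side, the claimed identity \eqref{id} is equivalent to
\begin{equation*}
 \Big(h+\frac{\beta}{A}+\frac{\alpha}{A^2}\Big)^2 - \Big(h+\frac{\beta}{A}\Big)^2 = -\frac{2(\alpha h+Q+\Gamma)}{A^2} - \frac{1}{A^2}\Big(\Gamma+\frac{c_3}{c_2}\Big),
\end{equation*}
and the left-hand side expands to $\frac{2\alpha}{A^2}\big(h+\frac{\beta}{A}\big)+\frac{\alpha^2}{A^4}$. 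So after multiplying through by $A^2$ everything reduces to the scalar (i.e. $q$-independent, $h$-free) identity
\begin{equation*}
 2\alpha\Big(h+\frac{\beta}{A}\Big)+\frac{\alpha^2}{A^2} = -2(\alpha h+Q+\Gamma)-\Gamma-\frac{c_3}{c_2},
\end{equation*}
in which the $2\alpha h$ terms cancel, leaving a relation to be checked between $\beta$, $Q$, $\Gamma$ and the constants.

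Thus the real content is to verify
\begin{equation*}
 \frac{2\alpha\beta}{A}+\frac{\alpha^2}{A^2}+2Q+3\Gamma+\frac{c_3}{c_2}=0
\end{equation*}
as functions on $[-p_0,0]$. Here I would plug in $Q=Ap+B$ and $\beta=\frac{A}{2\alpha}\Gamma+\frac{A^2}{\alpha}p+C_0$ from \eqref{beta}, which gives $\frac{2\alpha\beta}{A}=\Gamma+2Ap+\frac{2\alpha C_0}{A}$; collecting terms, the $\Gamma$-coefficient becomes $1+3=4$ unless I have mis-tracked a sign, so at this point I expect to need the ODE \eqref{gamma} to reduce $\Gamma$ further, or — more likely — that the correct bookkeeping makes the $p$-linear and $\Gamma$-linear parts cancel on the nose against $c_3/c_2$ (note $c_3/c_2 = 2B - \frac{\alpha^2}{A^2} - \frac{2\alpha C_0}{A}$ from \eqref{ci}), leaving $0=0$. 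The point is that \eqref{gaucho}, i.e. $a_1=a_2=0$, was \emph{exactly} the set of relations forcing \eqref{Pres}–\eqref{ci}, so the identity \eqref{id} should be a formal consequence with no residual differential information needed; if a stray $\Gamma$ survives, it must be absorbed using \eqref{gamma}.

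The main obstacle I anticipate is purely one of sign- and constant-bookkeeping: the definitions \eqref{ais}, \eqref{beta}, \eqref{ci} involve several layers of substitution, and it is easy to lose a factor of $2$ or a sign when expressing $c_3/c_2$ and $\beta$ in closed form. My strategy to contain this is to (i) reduce to the scalar identity displayed above before touching any of the explicit constants, so that only one short linear relation in $p$ and $\Gamma$ must be checked; (ii) substitute \eqref{Pres} and \eqref{beta} and separate the result into its $\Gamma$-part, its $p$-part, and its constant part; (iii) check each of the three vanishes, invoking \eqref{gamma}–\eqref{ci} only if the $\Gamma$-part does not vanish outright. I do not expect step (iii) to require anything beyond the relations already recorded in the excerpt, since \eqref{id} is being used downstream merely to pin down the geometry of the streamlines, not to inject new constraints.
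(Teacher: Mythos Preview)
Your approach is exactly the paper's: start from the second equation of \eqref{R:1} combined with \eqref{has}, complete the square, and reduce to a scalar identity in $p$ that is verified using \eqref{Pres}, \eqref{beta}, and \eqref{ci}. The only issue is a sign slip when you ``move the last term to the left-hand side'': substituting your expression for $(h+\beta/A)^2 h_q^2$ into \eqref{id} gives
\[
\Big(h+\frac{\beta}{A}+\frac{\alpha}{A^2}\Big)^2-\Big(h+\frac{\beta}{A}\Big)^2
= \mathbf{+}\,\frac{2(\alpha h+Q+\Gamma)}{A^2}-\frac{1}{A^2}\Big(\Gamma+\frac{c_3}{c_2}\Big),
\]
not $-2(\alpha h+Q+\Gamma)/A^2$ on the right. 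With the correct sign the $2\alpha h$ terms genuinely cancel (as you asserted, though your displayed equation contradicts it), and the scalar identity to check is
\[
\frac{2\alpha\beta}{A}+\frac{\alpha^2}{A^2}-2Q-\Gamma+\frac{c_3}{c_2}=0,
\]
not $+2Q+3\Gamma$. Plugging in $\frac{2\alpha\beta}{A}=\Gamma+2Ap+\frac{2\alpha C_0}{A}$, $Q=Ap+B$, and your (correct) value $c_3/c_2=2B-\alpha^2/A^2-2\alpha C_0/A$, the $\Gamma$-, $p$-, and constant parts all vanish on the nose; the ODE \eqref{gamma} is never needed. This is precisely what the paper does, just with the square completed first rather than last.
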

\begin{proof}
Invoking the second relation of \eqref{R:1}, \eqref{has}, and using \eqref{Pres} we find that
\[
(Ah+\beta)^2h_q^2+(Ah+\beta)^2+2\alpha h+2Q+2\Gamma=0
\]
in $\0,$ or equivalently
\[
\left(h+\frac{\beta}{A}\right)^2h_q^2+\left(h+\frac{\beta}{A}+\frac{\alpha}{A^2}\right)^2-\frac{2\alpha\beta}{A^3}-\frac{\alpha^2}{A^4}+\frac{2Q}{A^2}+\frac{2\Gamma}{A^2}=0.
\]
The desired relation follows now from \eqref{beta} and \eqref{ci}.
\end{proof}

Observe from \eqref{id} that $\Gamma+c_3/c_2<0$ for all $p\in(-p_0,0].$
We infer then from \eqref{gamma} and \eqref{ci} that there exists a constant $C_1$ such that 
\begin{equation}\label{gamma1}
 \Gamma+\frac{\alpha^2}{A^2}\ln \left(-\Gamma-\frac{c_3}{c_2}\right)=-2Ap+C_1
\end{equation}
in $[-p_0,0].$
This shows  in fact that $\Gamma+c_3/c_2<0$ on $[-p_0,0].$
In turn, this implies   $c_0\Gamma+c_1\neq0$ for all $\Gamma\in[m,M],$ where $m:=\min_{[-p_0,0]}\Gamma$ and $M:=\max_{[-p_0,0]}\Gamma.$
Letting $f:(m-\e,M+\e)\to\R$ be the function defined by $f(\Gamma):=-(c_2\Gamma+c_3)/(c_0\Gamma+c_1),$
we see that this function is well-defined and real-analytic on $(m-\e,M+\e)$ provided $\e$ is small.
Since $\Gamma'=f(\Gamma),$ we deduce that $\Gamma$ possesses an extension (which is called also $\Gamma$) defined on some interval $(-p_0-\delta,\delta)$ with $\delta>0.$
Additionally, $\Gamma$ inherits the regularity of $f$, that is $\Gamma$ is real-analytic on $(-p_0-\delta,\delta)$.

We define now the function $\wt h:\R\times [-p_0-\delta/2,0]\to\R$ by setting $\wt h(q,p)=h(q,p)$ in $\0,$ 
and 
\[
\wt h(q,p):=h(q,-p_0)+\int_{-p_0}^p\frac{1}{\sqrt{1/h_p^2(q,-p_0)+2\Gamma(-p_0)-2\Gamma(s)}}\, ds 
\]
for $p\in[-p_0-\delta/2,-p_0].$
We note first that $\wt h$ is well-defined.
Indeed, this is a consequence of the fact that $h(q,-p_0)=-d$ and of relation \eqref{has}.
Moreover, $\wt h\in C^2(\R\times[-p_0-\delta/2,0])$ is the solution of the quasilinear elliptic equation
\begin{equation*}
(1+\wt h_q^2)\wt h_{pp}-2\wt h_p\wt h_q\wt h_{pq}+\wt h_p^2\wt h_{qq}-\gamma \wt h_p^3=0\qquad\text{in $\R\times[-p_0-\delta/2,0]$},
\end{equation*}
 with $\wt h$ satisfying \eqref{COND1'} in $\R\times[-p_0-\delta/2,0].$
We enhance that condition \eqref{COND1'} guarantees that this quasilinear  equation  is  uniformly elliptic. 
Since $\gamma$ is   real-analytic and all three equations of the system depend analytically on $\wt h$, the regularity results in \cite{GT01, Mo66} imply that $\wt h$ is real-analytic in the strip $\R\times(-p_0-\delta/2,0)$.
But, since $\wt h_q\equiv0$ in $\R\times[-p_0-\delta/2,-p_0],$ the principle of analytic continuation implies
the flow must be laminar, in contradiction with our assumption.
 This finishes the proof of Theorem \ref{MT:1}. 

\section{Proof of Theorem \ref{MT:2}}\label{S4}
In this section we consider a  non-laminar solution  $(u,v,P,\eta)$  which satisfies all the assumptions of Lemma \ref{L:3} and the far field condition \eqref{E:T2}, that is a solution of the deep water wave problem. 
Furthermore, we let $h$ be the corresponding height function.
From the proof of Theorem \ref{MT:1} we deduce that it is possible to choose $p_0\geq0$ such that $h_q(\cdot,p)\not\equiv0$ for all $p\leq-p_0.$ 

We shall restrict our attention first to the flow in $\0':=\R\times(-\infty, -p_0).$
By the results of the previous section, the relations \eqref{Pres}, \eqref{beta}, \eqref{gamma} (and subsequently \eqref{gamma1})  are satisfied on $(-\infty,-p_0],$ and the identity \eqref{id} takes place in all $\0'.$
Particularly, $\Gamma+c_3/c_2<0$ in $(-\infty,-p_0].$
This implies that 
\begin{equation}\label{A}
 A<0.
\end{equation}
Indeed, if $A>0$, then we infer from \eqref{gamma1} that $\Gamma(p)\to\infty$ as $p\to-\infty$, which  contradicts the fact that $\G$ is bounded from above.

Our previous analysis  allows us to prove now that the flow possesses some of the characteristics of Gerstner's solution for deep water waves in the $f-$plane approximation.

\begin{lemma}[The flow is rotational] \label{L:5}
There exists $\e>0$ such that $\gamma$ is real-analytic on $(-\infty, -p_0+\e).$
Moreover, $\gamma'<0$ and $\gamma\to_{p\to-\infty}0.$ 
\end{lemma}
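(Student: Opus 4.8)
The plan is to read off the sign and the decay of $\gamma=\Gamma'$ from the autonomous ODE \eqref{gamma} it satisfies on $(-\infty,-p_0]$, using the far--field condition \eqref{R:3} to fix the relevant constants. Write $f(y):=-(c_2y+c_3)/(c_0y+c_1)$, so that \eqref{gamma} reads $\Gamma'=f(\Gamma)$. The key elementary fact, obtained by substituting the expressions \eqref{ci}, is $c_1c_2-c_0c_3=A^3/2$, which is nonzero by \eqref{A}; hence the linear polynomials $c_0y+c_1$ and $c_2y+c_3$ have no common zero, and together with \eqref{gamma} this forces $c_0\Gamma+c_1$ to be nowhere vanishing on $(-\infty,-p_0]$ (at a zero, \eqref{gamma} would make $c_2\Gamma+c_3$ vanish too, a contradiction). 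Consequently $f$ is real--analytic along the graph of $\Gamma$, and, exactly as in the proof of Theorem \ref{MT:1} (solutions of a real--analytic autonomous ODE are themselves real--analytic), the function $\Gamma$ is real--analytic on $(-\infty,-p_0]$ and extends real--analytically past $p=-p_0$ to an interval $(-\infty,-p_0+\e)$; thus $\gamma=\Gamma'$ is real--analytic there, which settles the first assertion.

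Next I would differentiate \eqref{gamma}, equivalently use $\gamma'=\Gamma''=f'(\Gamma)\Gamma'$, to obtain $\gamma'=-\frac{A^3}{2(c_0\Gamma+c_1)^2}\,\gamma$; since $A<0$ the prefactor is positive, so $\gamma'$ and $\gamma$ carry the same sign, and it only remains to show $\gamma<0$ on $(-\infty,-p_0+\e)$ and $\gamma(p)\to0$ as $p\to-\infty$. For the limit, \eqref{R:3} gives $h_p\to1/c$ and $h_q\to0$ as $p\to-\infty$ (uniformly in $q$), while \eqref{has} and \eqref{Pres} give $h+\beta/A=1/(Ah_p)\to c/A$; since the left--hand side of \eqref{id} then tends to $(c/A+\alpha/A^2)^2$, we obtain $\Gamma(p)\to L:=-c_3/c_2-(c+\alpha/A)^2$, a finite number. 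Because $\Gamma$ is an antiderivative of $\gamma$, the improper integral $\int_{-\infty}^{-p_0}\gamma$ converges; if $c_0L+c_1=0$ then $|\gamma(p)|=|f(\Gamma(p))|\to\infty$, incompatible with that convergence, so $c_0L+c_1\neq0$, $f$ is continuous at $L$, $\gamma(p)\to f(L)$, and convergence of the integral forces $f(L)=0$, that is $c_2L+c_3=0$. Hence $L=-c_3/c_2$, so $c+\alpha/A=0$; in particular $\alpha=-cA>0$, $c_0=A^2/(2\alpha)>0$, $c_2=A^3/\alpha<0$, $c_0L+c_1=(c_1c_2-c_0c_3)/c_2=\alpha/2>0$, and $\gamma(p)\to0$ as $p\to-\infty$.

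To fix the sign on the whole interval, recall that $\Gamma+c_3/c_2<0$ on $(-\infty,-p_0]$ (from \eqref{id}), so $c_2\Gamma+c_3=c_2(\Gamma+c_3/c_2)>0$ there; moreover $p\mapsto c_0\Gamma(p)+c_1$ is continuous, never zero, and tends to $\alpha/2>0$ as $p\to-\infty$, hence is positive on $(-\infty,-p_0]$. By continuity of the analytic extension, after shrinking $\e$ both inequalities persist on $(-\infty,-p_0+\e)$, so $\gamma=-(c_2\Gamma+c_3)/(c_0\Gamma+c_1)<0$ there, and then $\gamma'<0$ by the identity of the previous paragraph.

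The step I expect to be the crux is the passage to the limit in \eqref{id}: turning the far--field convergence $\nabla h\to(0,1/c)$ into a genuine finite limit for $\Gamma$, and ruling out that this limit sits at a pole of $f$. Everything else — the identity $c_1c_2-c_0c_3=A^3/2$, the analyticity bootstrap borrowed from Theorem \ref{MT:1}, and the sign bookkeeping — should be routine once that limit is in hand.
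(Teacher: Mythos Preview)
Your argument is correct, but it reaches the conclusions by a genuinely different route than the paper. The paper never invokes the far--field condition \eqref{R:3} in this lemma: instead it uses the integrated relation \eqref{gamma1} together with $A<0$ and the algebraic fact $-c_1/c_0<-c_3/c_2$ to trap $\Gamma$ in the interval $(-c_1/c_0,-c_3/c_2)$, from which $\gamma<0$ is read off via $\gamma=-2A(\Gamma+c_3/c_2)/(\Gamma+c_1/c_0)$; the limit $\Gamma\to -c_3/c_2$ (hence $\gamma\to0$) then falls out of \eqref{gamma1} because $\Gamma$ is bounded while the right--hand side tends to $-\infty$. You, by contrast, feed \eqref{R:3} into \eqref{id} (via $h+\beta/A=1/(Ah_p)\to c/A$) to obtain directly a finite limit $L$ for $\Gamma$, and then let the autonomous ODE force $L=-c_3/c_2$; the positivity of $c_0\Gamma+c_1$ is deduced from its limit $\alpha/2$ rather than from the dichotomy argument. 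Your approach has the pleasant side effect of producing $\alpha=-cA>0$ here, something the paper only obtains later in \eqref{alpha} by a separate observation; the paper's approach, on the other hand, is more self--contained at this stage since it needs only the ODE data \eqref{gamma}--\eqref{gamma1} and not the boundary behaviour of $h$. Both the resultant identity $c_1c_2-c_0c_3=A^3/2$ you use for $f'$ and the analyticity bootstrap are sound and match the paper's reasoning for the first assertion.
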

\begin{proof} Since $\Gamma<-c_3/c_2$ in $(-\infty,-p_0]$, we infer  from the equation \eqref{gamma} that $\Gamma(p)\neq -c_1/c_0$ for all $p\in(-\infty,-p_0]$.
It follows readily from  \eqref{ci} that $-c_1/c_0<-c_3/c_2$, so that either $\Gamma<-c_1/c_0$ or $-c_1/c_0<\Gamma<-c_3/c_2.$
 By \eqref{gamma1} and \eqref{A}, we can exclude the alternative $\Gamma<-c_1/c_0$, so that 
 \begin{equation}\label{G}
 -c_1/c_0<\Gamma<-c_3/c_2\qquad\text{for all $p\in(-\infty,-p_0].$}
\end{equation}
Particularly, the same arguments as in the previous section imply that $\Gamma$ is real-analytic on $(-\infty,-p_0+\e)$, provided $\e>0$ is small, and since by  \eqref{gamma} 
\[
\gamma=\Gamma'=-\frac{c_2}{c_0}\frac{\Gamma+c_3/c_2}{\Gamma+c_1/c_0}=-2A\frac{\Gamma+c_3/c_2}{\Gamma+c_1/c_0},
\]
we use \eqref{A} and \eqref{G} to obtain that $\gamma<0$ and $\gamma\to_{p\to-\infty} 0 $ (or equivalently $\Gamma(p)\to_{p\to-\infty}-c_3/c_2.$)
Differentiating the last equation once we see that $\gamma'<0.$ 
This completes our argument. 
\end{proof}

Since $\gamma$ is real-analytic on $(-\infty,-p_0+\e),$ the regularity results in \cite{Matxx} imply that all the streamlines $h(\cdot,p),$ $p<-p_0+\e$,  are real-analytic functions.
Even more, using   regularity results for quasilinear elliptic equations \cite{Mo66}, we know by \eqref{COND1'} that $h$ is real-analytic in  $\0'$. 
Let now 
\[
 K(p):=-\frac{1}{A}\left(-\Gamma-\frac{c_3}{c_2}\right)^{1/2}\qquad\text{for $p\leq -p_0$}.
\]
With this notation, equation \eqref{id} becomes 
\begin{equation}\label{id'}
 \left(h+\frac{\beta}{A}\right)^2h_q^2+\left(h+\frac{\beta}{A}+\frac{\alpha}{A^2}\right)^2=K^2\qquad\text{in $\0'.$}
\end{equation}
Furthermore, we define
\begin{equation}\label{par}
  (x(s,p),z(s,p)):=\left(\frac{\alpha}{A}s-K(p)\sin(As),-\frac{\alpha}{A^2}-\frac{\beta(p)}{A}+K(p)\cos(As)\right)
\end{equation}
for all $s\in\R$ and $p\leq -p_0.$ 
Fixing $p\leq -p_0$, we have that
\[
 x_s=\displaystyle\frac{\alpha}{A}-AK(p)\cos(As), \qquad s\in\R,
\]
and since $K(p)\to_{p\to-\infty }0$ we may take $p_0$ large to guarantee  that $ x(\cdot,p):\R\to\R$ is a diffeomorphism for all $p\leq-p_0.$
This fact allows us to define the map $ s:\R\times(-\infty,-p_0]\to\R$  by the relation $ x( s(q,p),p)=q$ for all $(q,p)\in\R\times(-\infty,-p_0]$.
Note that the implicit function theorem ensures that $ s$ is real-analytic as well.
Finally, let $\wt  h:\R\times(-\infty,-p_0]\to\R$ be defined by
\[
\wt h(q,p):= z( s(q,p),p)\qquad\text{for $(q,p)\in\R\times(-\infty,-p_0]$.}
\]
Then,  using the chain rule, we find that $\wt h$ satisfies the following relation
\begin{align*}
 &\left(\wt h+\frac{\beta}{A}\right)^2\wt h_q^2+\left(\wt h+\frac{\beta}{A}+\frac{\alpha}{A^2}\right)^2=\left(-\frac{\alpha}{A^2}+K\cos(A s)\right)^2\left(\frac{z_s}{ x_s}\right)^2+K^2\sin^2(A s)\\
&\qquad=\left(\frac{\alpha}{A^2}-K\cos(A s)\right)^2\frac{A^2K^2\cos^2(A s)}{\left(\alpha/A-AK\cos(A s)\right)^2}+K^2\sin^2(As)\\
&\qquad=K^2.
\end{align*}
Since $ h(\cdot,p)$ and $\wt h(\cdot,p)$ are both real-analytic solutions of \eqref{id'}, by the theorem of Picard-Lindel\"of we find a function $\delta=\delta(p)$, $p\leq -p_0,$
such that
\begin{equation}\label{identify}
h(q,p)=\wt h(q+\delta(p),p) \qquad\text{ in $\R\times(-\infty,-p_0]$.}
\end{equation}
Using the implicit function theorem and \eqref{id},  we may choose $\delta$ to be a real-analytic function. 
Furthermore, since $h(q,p)\to-\infty$ when $p\to-\infty,$ we infer from \eqref{beta}, \eqref{par}, and \eqref{identify} that
\begin{equation}\label{alpha}
 \alpha>0.
\end{equation}

Given  $s\in\R$ and $p\leq-p_0$, we let $q\in\R$ satisfy $ s= s(q+\delta(p),p).$
Then  $q= x( s,p)-\delta(p)$, 
\[
h(q,p)=\wt h(q+\delta(p),p)= z( s(q+\delta(p),p),p)=z(s,p),
\]
and therefore we have
\begin{align*}
(u-c)(x( s,p)-\delta(p),   z(s,p))=&(u-c)(q, h(q,p))=-\frac{1}{h_p}(q,p)=-Ah(q,p)-\beta(p)\\
=&-Az(s,p)-\beta(p)=\frac{\alpha}{A}-AK(p)\cos(As)\\
=&\frac{\p}{\p s}(x(s,p)-\delta(p)),
\end{align*}
respectively
\begin{align*}
w(x( s,p)-\delta(p),   z( s,p))=&w(q, h(q,p))=-\frac{h_q}{h_p}(q,p)=-\wt h(q+\delta(p),p)(Az(s,p)+\beta(p))\\
=&\frac{-AK(p)\sin(As)}{\alpha/A-AK(p)\cos(As)}\left(\frac{\alpha}{A}-AK(p)\cos(As)\right)\\
=&\frac{\p z}{\p s}(s,p).
\end{align*}

This shows that the  path of an arbitrary particle in the  steady flow  beneath $y=h(\cdot,-p_0)$ is described by the curve
\[
 t\mapsto (\wt x(t+s,p),\wt z(t+s,p)):=(x(t+s,p)-\delta(p),z(t+s,p)),
\] 
whereby $(\wt x(s,p),\wt z(s,p))$, with $(s,p)\in\R\times(-\infty,-p_0]$ is the initial position of the particle.
Back to the original reference frame $(X,Z),$ the position of the particles at any time  is described by the mapping
\[
[0,\infty)\times \R\times(-\infty,-p_0]\ni (t,s,p)\mapsto (\wt X(t,s,p),\wt Z(t,s,p)):=(\wt x(t+s,p)+ct,\wt z(t+s,p)). 
\]
Indeed, one can  easily check that $(\wt X(\cdot, s,p),\wt Z(\cdot, s,p ))$ are   solutions of the non-autonomous system
\[
 \left\{
\begin{array}{llll}
 X_t&=u(t,X,Z)=u(X-ct, Z),\\
Z_t&=w(t,X,Z)=w(X-ct,Z)
\end{array}
\right.
\]
for each $s\in\R$ and $p\leq-p_0.$

In the remainder of this section we show that the equations for the particle paths can be brought in the form \eqref{Lag}.
To this end, we introduce    new variables 
\[a:=\frac{\alpha}{A}s\qquad\text{and}\qquad b:=T(p)=\displaystyle\frac{\ln\left(\frac{A^2}{\alpha}K(p)\right)}{\frac{A^2}{\alpha}},\]
with $a\in\R$ and $b\leq b_0:=H(-p_0).$
That the map $T:(-\infty,-p_0]\to (-\infty,b_0]$ is a diffeomorphism is a consequence of the relation
\[
T'(p)=\frac{\alpha}{A^2}\frac{K'(p)}{K(p)}=\frac{\alpha}{2A^2}\frac{2K(p)K'(p)}{K^2(p)}=-\frac{\alpha}{2A^4}\frac{\gamma(p)}{K^2(p)}>0\qquad\text{for $p\leq -p_0,$}
\]
cf. \eqref{alpha} and Lemma \ref{L:5}. 
Letting the positive constants $k$ and $m $ be defined by
\[
k:=\frac{A^2}{\alpha}\qquad\text{and}\qquad m:=-\frac{\alpha}{A},
\] 
the  paths of the particles located initially beneath the curve $(\wt X(0,\cdot,-p_0), \wt Z(0,\cdot,-p_0))$, which we may regard as being the surface of the wave, 
are described in the  reference frame located on the Earth's surface by the mappings
\begin{equation}\label{Lag1}
\left\{ 
\begin{array}{llll}
&\displaystyle X(t,a,b):=-\wt\delta(b)+a+\left(c-m\right)t-\frac{e^{kb}}{k}\sin(k(a-mt)),\\[1ex]
&\displaystyle Z(t,a,b):=h_0+b+\frac{e^{kb}}{k}\cos(k(a-mt)),
\end{array}
\right.
\end{equation}
 with $a\in\R$, $b\leq b_0.$ 
Hereby, we have defined  $\wt \delta:=\delta\circ T^{-1}$ and $h_0$ is a suitable constant.
The second equation  of \eqref{Lag1} follows from the relation
\[
 (\beta\circ T^{-1})'(b)=\frac{A^2}{\alpha}\left(\beta'\frac{K}{K'}\right)\circ H^{-1}=\frac{2A^2}{\alpha}\left(\beta'\frac{K^2}{(K^2)'}\right)\circ H^{-1}=-A\qquad\text{for all $b\leq b_0,$} 
\]
which is a consequence  of the fact that  \eqref{gamma1} may be written in the equivalent form
\[
\beta=-\frac{\alpha}{2A}\ln(A^2K^2)+\frac{AC_1}{2\alpha}-C_0\qquad\text{in $(-\infty,-p_0]$}.
\]
That the flow generated by \eqref{Lag1} is incompressible reduces to showing that 
\[
X_{ta}Z_b-X_{tb}Z_a-Z_{ta}X_b+Z_{tb}X_a=0\qquad\text{in $[0,\infty)\times\R\times(-\infty,-p_0],$}
\]
cf. Lemma 3.4 in  \cite{AM12xx}.
However, the latter condition is equivalent to $\wt\delta'=0,$ meaning that $\delta$ is a constant function.
Translating the original solution $h$ suitably, we may assume without restricting the generality that $\delta\equiv0.$ 
Back to the coordinates $(s,p)$ we have that 
\[
(u-c)(x(s,p),z(s,p))=\frac{\alpha}{A}-AK(p)\sin(As)\to\frac{\alpha}{A},
\] 
when $p\to-\infty,$ uniformly in $s$.
But then, by \eqref{E:T2}, we identify $m$ with the wave speed $m=c,$ 
so that the trajectories of the particles are described by
\begin{equation}\label{Lag2}
\left\{ 
\begin{array}{llll}
&\displaystyle X(t,a,b)=a-\frac{e^{kb}}{k}\sin(k(a-ct)),\\[1ex]
&\displaystyle Z(t,a,b)=h_0+b+\frac{e^{kb}}{k}\cos(k(a-ct))
\end{array}
\right.
\end{equation}
 for all  $a\in\R$, $b\leq b_0,$ and $t\geq0.$

Since $h$ is real-analytic in the interior of the set $\{(q,p)\,:\, h_q(\cdot,p)\not\equiv0\}$, we conclude that $\0'=\0$, and that there exists a $b_0\leq0$ such that our original solution  is described by the equations \eqref{Lag2} 
in $\0_\eta.$
This finishes the proof.

\vspace{0.5cm}
\hspace{-0.5cm}{\large \bf Acknowledgement}\\[2ex]
A.-V. Matioc was supported by the FWF Project I544 --N13 ``Lagrangian kinematics of water waves'' of the Austrian Science Fund.\\

\bibliographystyle{abbrv}
\bibliography{AMBM.bib}
\end{document}